\title{The Cremona-Richmond Configuration revisited and generalized}
\author{M. Pra\.zmowska, K. Pra\.zmowski}
\def\LineOn(#1,#2){\overline{{#1},{#2}\rule{0em}{1,5ex}}}
\def\collin{\sim}
\def\konftyp(#1,#2,#3,#4){\left( {#1}_{#2}\, {#3}_{#4} \right)}
\def\pointnumbsymb{\mbox{\boldmath$\nu$}}
\def\pointnumbsymbmaly{\mbox{\scriptsize\boldmath$\nu$}}
\def\pointnumb{\mathchoice{\pointnumbsymb}{\pointnumbsymb}{\pointnumbsymbmaly}{\pointnumbsymbmaly}}
\def\linenumbsymb{\mbox{\boldmath$b$}}
\def\linenumbsymbmaly{\mbox{\scriptsize\boldmath$b$}}
\def\linenumb{\mathchoice{\linenumbsymb}{\linenumbsymb}{\linenumbsymbmaly}{\linenumbsymbmaly}}
\def\ranksymb{\mbox{\boldmath$r$}}
\def\ranksymbmaly{\mbox{\scriptsize\boldmath$r$}}
\let\pointrank\rank
\def\GrasSpace(#1,#2){{\bf G}_{{#2}}({#1})}
\def\GrrSpace(#1,#2){{\bf H}_{{#2}}({#1})}
\def\GrrSpacex(#1,#2){{\bf H}^\ast_{{#2}}({#1})}
\def\GrrSpaced(#1,#2){{\bf H}^{\mbox{\footnotesize\boldmath$\circ$}}_{{#2}}({#1})}
\def\CRSpace(#1,#2,#3){{\bf R}_{{#2}}^{{#3}}({#1})}
\def\fixfield{\goth F}
\newenvironment{ctext}{%
  \par
  \smallskip
  \centering
}{%
 \par
 \smallskip
 \csname @endpetrue\endcsname
}
\newcounter{sentencex}
\def\thesentencex{\Roman{sentencex}}
\def\labelsentencex{\upshape(\thesentencex)}
\def\labelsentencey{\bf}  
\def\myend{{}\hfill{\small$\bigcirc$}}
\begin{document}

\maketitle

\def\koral#1#2#3{\binom{#1}{{#2}\ast{#3}}}
\def\koralf#1#2{\binom{{#1}{#2}}{{#1}\ast{#2}}}

\begin{abstract}
  We propose a generalization of the classical point-line Cremona-Richmond configuration
  to a configuration of points and more dimensional subspaces of a projective space,
  and present them as geometric realizations of some interesting abstract incidence
  structures.
\end{abstract}

\begin{flushleft}\small
  Mathematics Subject Classification (2010): 05B30, 51E30, 51E20.\\
  Keywords: Cremona-Richmond Configuration, generalized quadrangle,
  weak chain space, configuration and its (projective) realization.
\end{flushleft}


\section*{Introduction}

The Cremona-Richmond Configuration (shortly: C-R-Configuration) was invented
as a geometrical $\konftyp(15,3,15,3)$-configuration of points and lines
in a projective 3-space (see \cite{coxet}, \cite{vorau}).
On the other hand, it is also the famous generalized quadrangle of order $(2,2)$
(see \cite[Page 122]{payne} and \cite{obrazki}).
It can be also presented as a (combinatorial) scheme of the 2-subsets 
of a 6-set (cf. \cite{kulki}). 
Another nice example can be found in elementary geometry: C-R-Configuration
is obtained as a figure consisting of some special points in a triangle
(see \cite{evans}).
Each one of these presentations can be generalized,
and these generalizations need not necessarily be consistent/contigent.
\par
In this note we introduce a generalization of the {\em combinatorial} C-R-configuration
and discuss in a point its automorphisms. 
Then, we define a (quite natural) generalization of the {\em geometrical} C-R-configuration
and we show that the obtained structures are realizations 
of some generalized combinatorial C-R-configurations.
To some extent following that way we can realize also all the generalized C-R-configurations,
and some other related combinatorial configurations.


\section{Constructions and results: combinatorial configurations}

Let $\sub_l(X)$ denote the set of $l$-element subsets of a set $X$.
Let $k,s$ be positive integers, and
$X$ be a set with $n  = k\cdot s + m$ elements, $m \geq 0$.
For  $a \subset X$ we write $\varkappa(a) = X\setminus a$.
Let us set
\begin{multline}
  {\mathscr E}_k^s(X) \/ :=  \/ 
  {\mathscr E}  \/ := \/
  \big\{ \{ a_1\cup p,\ldots,a_s\cup p \}\colon a_1,...a_s\in\sub_k(X),\; p \in \sub_m(X),
  \\
   a_i\cap a_j = \emptyset = p \cap a_i,a_j  \text{ for all } 1\leq i<j\leq s  \big\}.
\end{multline}
%
We call the incidence structure
\begin{equation}
  \CRSpace(X,k,s) \/ := \/ \struct{\sub_{k+m}(X),{\mathscr E}_k^s(X)}
\end{equation}
%
a {\em generalized Cremona-Richmond configuration}. 
We write, shortly
\begin{ctext}
  $\CRSpace(n,k,s)$ for $\CRSpace(X,k,s)$, where $X$ is a set with $|X| = n\geq ks$. 
\end{ctext}
The number $m = |X| - ks$  is uniquely determined by the parameters $X$, $s$, and $k$.
\par
Recall also one more definition from \cite{kulki}:
a {\em generalized Sylvester system} is a structure of the form
\begin{equation}
  \GrrSpace(X,k) = \struct{\sub_{2k}(X),
  {\big\{ \{ a,b,a\setminus b \cup b \setminus a \} \colon |a\cap b| = k,\, a,b \in\sub_{2k}(X) \big\}}}.
\end{equation}
%
Let $a$ be a point of $\CRSpace(X,k,s)$. 
   The neighborhood of $a$ is the substructure of $\CRSpace(X,k,s)$
   whose points are the points on the blocks through $a$ with the point $a$ deleted,
   and whose blocks are the blocks through $a$.
\begin{fact}\label{fct:otocz}
   Assume that $|X| = ks$ and $a\in\sub_k(X)$.  
   Then the neighborhood of $a$ in $\CRSpace(X,k,s)$ is exactly
   $\CRSpace(X\setminus a,k,s-1)$.
\end{fact}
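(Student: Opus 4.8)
Since the hypothesis $|X| = ks$ forces $m = 0$, my plan begins by unwinding the definitions in this degenerate parameter range. With $m = 0$ we have $\sub_m(X) = \{\emptyset\}$, so the points of $\CRSpace(X,k,s)$ are plainly the $k$-subsets of $X$, and a block is a family $\{a_1,\ldots,a_s\}$ of pairwise disjoint members of $\sub_k(X)$. The one genuine observation I would record is that such a family is automatically a \emph{partition} of $X$: the $s$ disjoint $k$-sets contain $sk = |X|$ elements and hence exhaust $X$. Applying the same remark to $X\setminus a$, whose cardinality is $k(s-1)$, shows that the blocks of $\CRSpace(X\setminus a,k,s-1)$ (where the reduced parameter is again $m' = |X\setminus a| - k(s-1) = 0$) are exactly the partitions of $X\setminus a$ into $s-1$ $k$-subsets.

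The body of the proof is then to identify the neighborhood of $a$ with $\CRSpace(X\setminus a,k,s-1)$ as one and the same incidence structure, by checking agreement on blocks and on points. For the blocks: a block through $a$ is a partition of $X$ into $k$-subsets one of which is $a$, i.e.\ it has the form $\{a\}\cup\{b_1,\ldots,b_{s-1}\}$ with $\{b_1,\ldots,b_{s-1}\}$ a partition of $X\setminus a$. Deleting $a$, as the definition of the neighborhood prescribes, leaves exactly $\{b_1,\ldots,b_{s-1}\}$, and this assignment is a bijection onto the set of blocks of $\CRSpace(X\setminus a,k,s-1)$ described above. For the points: the points of the neighborhood are the $k$-subsets occurring on blocks through $a$ with $a$ removed, hence $k$-subsets of $X\setminus a$; conversely every $b\in\sub_k(X\setminus a)$ does occur on such a block, since the remaining $k(s-2)$ elements of $X\setminus(a\cup b)$ can always be split into $s-2$ further $k$-subsets. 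Thus the point set of the neighborhood is precisely $\sub_k(X\setminus a)$, the point set of $\CRSpace(X\setminus a,k,s-1)$.

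Finally, since incidence in both structures is nothing but set-theoretic membership of a point in a block, it is preserved under the identification automatically, and the two structures coincide. I expect no real obstacle here: the only step carrying mathematical content is the partition phenomenon in the $m=0$ case, and the only point requiring care is the bookkeeping that guarantees the reduced configuration again has $m'=0$, so that its points stay plain $k$-subsets and the deletion map is genuinely a bijection on both points and blocks (for which one tacitly assumes $s\geq 2$, the case in which a block through $a$ exists at all).
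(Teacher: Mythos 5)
Your proof is correct and is essentially the paper's own (implicit) argument: the paper states this fact without proof, treating it as evident, and the bijection you construct --- a block through $a$ is $\{a\}\cup\{b_1,\ldots,b_{s-1}\}$ with $\{b_1,\ldots,b_{s-1}\}$ a partition of $X\setminus a$ into $k$-sets, every $b\in\sub_k(X\setminus a)$ occurring on such a block because the remaining $k(s-2)$ elements split into further $k$-sets --- is exactly the mechanism the paper later invokes (in its $m\geq 0$ form) in the proof of Fact~\ref{fct:paramy}. Your extra bookkeeping, that the reduced structure again has $m'=0$ and that $s\geq 2$ is tacitly needed, is sound and harmless.
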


%
The following is evident
\begin{fact}\label{fct:paramy}
  Let  $|X| = n = ks+m$. 
  Then 
  $\CRSpace(X,k,s)$ is a 
  $\konftyp({(\pointnumb_k^s)},{(\pointrank_k^s)},{(\linenumb_k^s)},{s})$-configuration
  with 
  $\pointnumb_k^s = \binom{n}{k(s-1)}$,
  $\pointrank_k^s = \frac{((s-1)k)!}{(s-1)! (k!)^{s-1}} \cdot \binom{k+m}{k} =
  \frac{(n - (s-1)k)! ((s-1)k)!}{ (n- ks)! (s-1)! (k!)^s}$, 
  and
  $\linenumb_k^s = \frac{n!}{(n - ks)!  s! (k!)^s}$, 
  such that
  \begin{enumerate}[{A}1.]\itemsep-2pt
  \item
    any two blocks which have (at least) $s-1$ common points coincide, and
  \item
    two distinct blocks may have $0,1,...,s-2$ common points.
  \end{enumerate}
  %
\end{fact}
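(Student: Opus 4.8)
The strategy is to describe blocks bijectively and then count. A block $\{a_1\cup p,\dots,a_s\cup p\}$ is determined exactly by the datum consisting of a pivot $p\in\sub_m(X)$ together with an unordered partition of $X\setminus p$ into $s$ pairwise disjoint $k$-element parts $a_1,\dots,a_s$; indeed $|a_1|+\dots+|a_s|+|p|=ks+m=n=|X|$, so the $a_i$ exhaust $X\setminus p$. The $s$ points $a_i\cup p$ are pairwise distinct, since deleting $p$ (disjoint from every $a_i$) recovers the distinct nonempty sets $a_i$; hence every block carries exactly $s$ points, giving the last parameter, and conversely a block determines its datum uniquely (the pivot is the intersection of its points, as I use again below), so the correspondence is a bijection. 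For the point count, the points are all $(k+m)$-subsets of $X$, whence $\pointnumb_k^s=\binom{n}{k+m}=\binom{n}{k(s-1)}$ using $k+m=n-k(s-1)$. For the blocks I would choose $p$ in $\binom{n}{m}$ ways and then split the remaining $ks$ elements into $s$ unordered $k$-subsets in $\tfrac{(ks)!}{s!\,(k!)^{s}}$ ways; since $\binom{n}{m}\tfrac{(ks)!}{s!(k!)^s}=\tfrac{n!}{m!\,s!\,(k!)^s}$ and $m=n-ks$, this yields $\linenumb_k^s$.

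Next I would compute the degree of an arbitrary point $c\in\sub_{k+m}(X)$, which must come out independent of $c$. A block through $c$ arises by choosing which $k$-subset $a\subseteq c$ is the variable part --- this forces the pivot $p=c\setminus a$, so there are $\binom{k+m}{k}$ choices --- and then by partitioning $X\setminus c$, a set of $k(s-1)$ elements, into $s-1$ unordered $k$-subsets, in $\tfrac{(k(s-1))!}{(s-1)!\,(k!)^{s-1}}$ ways. The product is $\pointrank_k^s$, and its independence of $c$ establishes regularity; the two displayed forms agree after substituting $\binom{k+m}{k}=\tfrac{(n-(s-1)k)!}{k!\,(n-ks)!}$. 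As a consistency check I would verify the flag identity $\pointnumb_k^s\cdot\pointrank_k^s=\linenumb_k^s\cdot s$, both sides equaling $\tfrac{n!}{m!\,(s-1)!\,(k!)^s}$.

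For A1 and A2 the key observation is that a block is reconstructible from few of its points. An element lying in $a_i\cup p$ for two distinct indices $i$ must lie in $p$, since the $a_i$ are pairwise disjoint; thus the intersection of any two or more points of a block equals its pivot. Hence if two blocks share $s-1\geq2$ points, these common points already intersect in the common pivot $p=p'$; removing $p$ identifies $s-1$ of the $k$-parts of each block, and the remaining part of each is forced as $X\setminus(p\cup a_1\cup\dots\cup a_{s-1})$, so the blocks coincide --- this is A1. Statement A2 is the complementary fact: by A1 two distinct blocks meet in at most $s-2$ points, and the smaller values are produced (for suitable parameters) by fixing the pivot together with some of the parts and repartitioning the rest, so that $0,1,\dots,s-2$ are the possible intersection cardinalities.

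The main obstacle is precisely the pivot-reconstruction step behind A1: the intersection argument recovers $p$ only when at least two common points are available, i.e. when $s-1\geq2$. I would therefore treat the boundary case $s=2$ separately. There A1 still holds when $m=0$ (distinct blocks are complementary $k$-set pairs and share no point), but it genuinely degenerates when $m\geq1$; for instance $\CRSpace(3,1,2)$ is a triangle, whose three blocks pairwise meet in one point. This single degeneracy is the one spot where the word ``evident'' deserves a caveat, and all the remaining content is the routine multinomial bookkeeping of the first two paragraphs.
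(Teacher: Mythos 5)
Your proof is correct and follows essentially the paper's route: the paper likewise reads off $\pointnumb_k^s$ directly, counts the rank of a point $a$ by pairing a pivot $p\in\sub_m(a)$ with a block of the neighborhood $\CRSpace(X\setminus a,k,s-1)$ (exactly your pivot-plus-partition count), obtains $\linenumb_k^s$ from the flag identity $\pointnumb_k^s\cdot\pointrank_k^s=s\cdot\linenumb_k^s$ where you instead count blocks directly (an immaterial inversion, and you verify the identity anyway), and settles A2 by the same repartition idea, which the paper makes explicit through the cyclic shift $b_i=(a_i\setminus\{x_i\})\cup\{x_{i+1\bmod r}\}$, valid for $k,r\geq 2$, where you only gesture at ``suitable parameters.'' Your boundary caveat is well taken and even sharpens the paper: A1 is left there as evident with no argument, the paper's own repartition lemma already presumes $k,r\geq 2$, and your pivot-recovery proof of A1 (intersection of two common points equals the pivot) correctly needs $s-1\geq 2$, so your counterexample $\CRSpace(3,1,2)$ showing A1 fails for $s=2$, $m\geq 1$ identifies a tacit restriction ($s\geq 3$, or $m=0$ when $s=2$) that the stated fact silently assumes.
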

\begin{proof}
  To compute $\pointnumb^s_k$ note that, right from definition,
  $\pointnumb^s_k = \binom{n}{n-(k+m)}$.
  The formula for $\pointrank^s_k$ follows by 
  the property analogous to \ref{fct:otocz}. 
  Let $a \in\sub_{k+m}(X)$.
    Each block of $\CRSpace(X,k,s)$ through $a$ has form 
    $\big\{ a \big\} \cup \big\{  p \cup u   \colon u \in B \big\}$, 
    where $B$ is an {\em arbitrary}
  block of $\CRSpace({X\setminus a},k,s-1)$ and $p\in\sub_m(a)$.
  The value
  $\frac{((s-1)k)!}{(s-1)! (k!)^{s-1}}$ is the number 
  of the decompositions of an $n-(k+m)=k(s-1)$-element set into $s-1$ 
  undistinguishable groups, with $k$ elements in each group.
  Clearly, each block of $\CRSpace(X,k,s)$ has $s$ elements.
  With 
  $\pointnumb^s_k \cdot \pointrank^s_k = \linenumb^s_k \cdot s$
  we compute $\linenumb^s_k$.
  \par
  Let $a_1,...,a_r$ be a decomposition of a $k r$-element set $Z$ into disjoint
  $k$-sets, $k,r\geq 2$. Choose $x_i\in a_i$ for each $i =1,...,r$ and  set 
  $b_i = (a_i \setminus\{ x_i \})\cup \{ x_{i +1 \mod r} \}$.
  Then $b_1,...,b_r$ is a decomposition of $Z$ and 
  $a_{i'}\neq b_{i''}$ for all $1 \leq i',i''\leq r$.
  That way we 
  compute the number of possible common points of two blocks of $\CRSpace(X,k,s)$.
\end{proof}
The structure  $\CRSpace(3k+m,k,3)$  
coincides  with the partial linear space
$\varkappa({\GrrSpace(3k+m,k)})$,
isomorphic to the partial Steiner triple system
$\GrrSpaced(3k+m,k+m)$ introduced in \cite{kulki}.
In particular, $\CRSpace(3k,k,3) \cong \GrrSpace(3k,k) \cong \GrrSpacex(3k,k)$
(see Figure \ref{fig:h2:6}).
More particularly, $\CRSpace(6,2,3)$ is the generalized quadrangle of order $(2,2)$.

\begin{figure}[!h]
\begin{center}
  \includegraphics{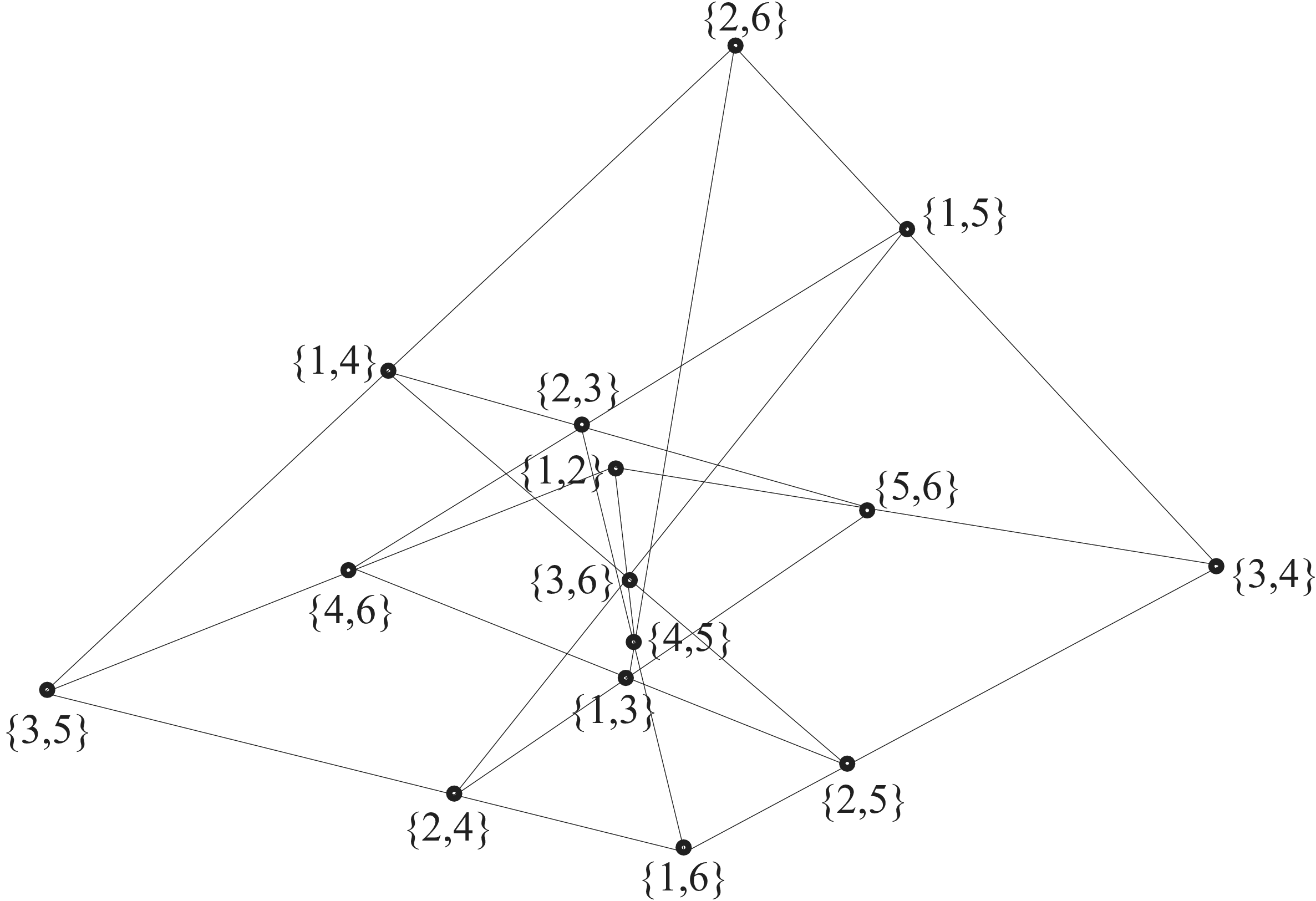}
\end{center}
\caption{A drawing of $\GrrSpacex(6,2)$ i.e. the {\em classical} Cremona-Richmond Configuration.}
\label{fig:h2:6}
\end{figure}

As a consequence of A1, A2 of \ref{fct:paramy} a structure of the form 
$\CRSpace(4k+m,k,4)$ may be considered as a weak (partial) chain space 
(cf. \cite{herzer}, \cite{belicox}).
In particular, by \ref{fct:otocz},
each weak chain structure $\CRSpace(4k,k,4)$ can be imagined as 
a block structure
obtained by nesting and closing several copies of ``generalized Sylvester Systems'' 
$\GrrSpace(3k,k)$, each one of the latter being a
famous triangle-free point-line configuration.
A few more properties of $\CRSpace(4k,k,4)$ are indicated in \ref{exm:CR-WCS}.

Let us also mention one more area of mathematics where the structures like 
$\CRSpace(n,k,s)$ appear. Namely, the binary joinablity relation on the point set
of the incidence structure $\CRSpace(ks,k,s)$ is the well-know {\em Kneser graph} $KG_{ks,k}$:
the vertices of $KG_{ks,k}$ are the elements of $\sub_k(X)$ where $|X| = ks$,
and the edges are the (unordered) pairs of {\em disjoint} elements of $\sub_k(X)$
(cf. e.g. the textbook \cite{godsil:grafy}). 

\subsection{Automorphisms}
Each $\varphi\in S_X$ determines the `image'-permutation $\widetilde{\varphi}$
of $\sub(X)$.
Let $l \leq |X|$ and  write 
\begin{ctext}
  $S_X^l = \big\{ \widetilde{\varphi}\restriction{\sub_l(X)}\colon \varphi\in S_X \big\}$.
\end{ctext}
It is evident that for each $\varphi \in S_X$ its action $\widetilde{\varphi}$ on 
$\sub_{k+m}(X)$ is an automorphism of $\CRSpace(X,k,s)$; we can say
\begin{ctext} 
   $S_X^{k+m} \subset \Aut({\CRSpace(X,k,s)})$.
\end{ctext}
The following is evident
\begin{lem}\label{lem:permut:transit}
  The permutation group $S_X^{k+m}$ acts transitively on the family of points of 
  $\CRSpace(X,k,s)$.
\end{lem}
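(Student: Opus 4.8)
The plan is to exhibit, for any two points $a,b$ of $\CRSpace(X,k,s)$, a single permutation of $X$ whose induced action carries $a$ to $b$; transitivity then follows at once. Since the points are by definition the elements of $\sub_{k+m}(X)$, both $a$ and $b$ are $(k+m)$-element subsets of $X$. In particular $|a| = |b| = k+m$, and therefore also $|X\setminus a| = |X\setminus b| = n-(k+m)$, so the two complements are equinumerous as well.

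First I would construct the required $\varphi\in S_X$ by choosing any bijection $\alpha\colon a\to b$ together with any bijection $\beta\colon X\setminus a\to X\setminus b$ (both exist precisely because the respective pairs of sets have equal cardinality), and gluing them into a single map $\varphi = \alpha\cup\beta$. Because $\{a, X\setminus a\}$ and $\{b, X\setminus b\}$ are partitions of $X$, the map $\varphi$ is well defined and is a bijection of $X$, i.e.\ $\varphi\in S_X$, and by construction $\varphi(a) = b$. Hence the induced permutation of subsets satisfies $\widetilde{\varphi}(a) = b$.

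Finally I would restrict $\widetilde{\varphi}$ to $\sub_{k+m}(X)$: this restriction lies in $S_X^{k+m}$ by the very definition of that set, and by the remark preceding the lemma it is an automorphism of $\CRSpace(X,k,s)$. Since it sends the point $a$ to the point $b$, and $a,b$ were chosen arbitrarily, $S_X^{k+m}$ acts transitively on the point set.

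There is no genuine obstacle here — the lemma simply records the standard fact that $S_X$ acts transitively on the family of $l$-subsets of $X$ for any fixed $l\leq|X|$, applied with $l = k+m$. The only point that warrants a line of justification is that the glued map $\varphi$ is a well-defined bijection, which is immediate from the fact that $a$ and $X\setminus a$ (respectively $b$ and $X\setminus b$) partition $X$.
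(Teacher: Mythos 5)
Your proof is correct and complete: the glued bijection $\varphi=\alpha\cup\beta$ is well defined, carries $a$ to $b$, and its induced action on $\sub_{k+m}(X)$ is an automorphism by the remark preceding the lemma. The paper itself offers no proof --- it labels the lemma ``evident'' --- and your argument is exactly the standard one it implicitly has in mind, so there is nothing to compare beyond noting that you have simply written out the details the authors omitted.
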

From the previous remarks and the results of \cite{kulki} (e.g.) we have
  $\Aut({\CRSpace(3k,k,3)}) = S^k_{3k} \cong S_{3k}$.
%
%
\begin{prop}\label{prop:crem:aut}
  Let $|X| = k s$. 
  Then $\Aut({\CRSpace(X,k,s)}) = S_X^k$.
\end{prop}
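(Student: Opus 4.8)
The plan is to establish the nontrivial inclusion $\Aut({\CRSpace(X,k,s)}) \subseteq S_X^k$, since the reverse inclusion $S_X^{k} \subseteq \Aut({\CRSpace(X,k,s)})$ is already recorded before the statement (it is the case $m=0$ of the remark $S_X^{k+m}\subseteq\Aut({\CRSpace(X,k,s)})$). I would carry out the argument in the genuine range $s\geq 3$, equivalently $|X| = ks > 2k$. The value $s=2$ is degenerate and must be excluded: there $\pointrank_k^2 = \frac{k!}{1!\,(k!)^{1}}\binom{k}{k}=1$, so every point lies on a single block and $\CRSpace(X,k,2)$ is merely a perfect matching, whose automorphism group is strictly larger than $S_X^k$ as soon as $k\geq 2$.

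First I would pass from the configuration to its \emph{joinability graph}. As already noted in the text, two points $a,b\in\sub_k(X)$ lie on a common block exactly when $a\cap b=\emptyset$, so this relation is the Kneser graph $KG_{ks,k}$. Any $f\in\Aut({\CRSpace(X,k,s)})$ sends blocks to blocks and is a bijection of points, hence preserves the relation ``lying on a common block'' and therefore restricts to a graph automorphism of $KG_{ks,k}$.

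Next I would recover a permutation of $X$ from $f$ by means of the stars. For $x\in X$ put $\mathscr{S}_x = \{\,a\in\sub_k(X)\colon x\in a\,\}$, the family of points through $x$; each $\mathscr{S}_x$ is an independent set of $KG_{ks,k}$ of size $\binom{ks-1}{k-1}$. The key input is the Erd\H{o}s--Ko--Rado theorem \emph{with its uniqueness part}, valid precisely because $ks>2k$ (see \cite{godsil:grafy}): the maximum-size independent sets of $KG_{ks,k}$ are exactly the $ks=|X|$ stars $\mathscr{S}_x$. Since being a maximum independent set is an automorphism-invariant notion, $f$ permutes the stars; and as $x\mapsto\mathscr{S}_x$ is a bijection between $X$ and the family of stars (distinct elements give distinct stars because $|X|>k$), there is a unique $\varphi\in S_X$ with $f(\mathscr{S}_x)=\mathscr{S}_{\varphi(x)}$ for every $x$. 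This step is the only real obstacle: everything rests on identifying the stars canonically, which is exactly where EKR and the hypothesis $|X|>2k$ (i.e. $s\geq 3$) are indispensable.

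Finally I would verify $f=\widetilde{\varphi}\restriction\sub_k(X)$, after which $f\in S_X^k$ by definition and the proof is complete. Fix a point $a$. Since $f(\mathscr{S}_x)=\mathscr{S}_{\varphi(x)}$ and $f$ is a bijection of points, we have $a\in\mathscr{S}_x \iff f(a)\in\mathscr{S}_{\varphi(x)}$, that is $x\in a \iff \varphi(x)\in f(a)$; hence $f(a)=\{\varphi(x)\colon x\in a\}=\widetilde{\varphi}(a)$. Thus every automorphism is induced by a permutation of $X$, and $\Aut({\CRSpace(X,k,s)})=S_X^k$. The remaining steps beyond the EKR input are purely formal bookkeeping.
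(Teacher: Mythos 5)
Your proof is correct, but it takes a genuinely different route from the paper's. The paper reduces everything to the relation $a \mathrel{\gamma} b \iff |a\cap b|=k-1$ on $\sub_k(X)$: it exhibits a first-order definition \eqref{def:gamma} of $\gamma$ in the language of the incidence structure (non-collinearity of $a,b$, a common completion $a_4,\dots,a_s$, and the count $\binom{2k-1}{k}$ of suitable common neighbours), concludes that every $f\in\Aut({\CRSpace(X,k,s)})$ preserves $\gamma$, and then invokes the cited results of \cite{kulki} and \cite{klin} that $\gamma$-preserving bijections of $\sub_k(X)$ are induced by $S_X$. You instead work with the complementary relation: collinearity in $\CRSpace(ks,k,s)$ is disjointness, i.e.\ the Kneser graph $KG_{ks,k}$ (a fact the paper itself records), and you recover the permutation $\varphi$ directly by the uniqueness part of Erd\H{o}s--Ko--Rado, identifying the maximum independent sets with the stars $\mathscr{S}_x$ --- in effect re-deriving $\Aut(KG_{n,k})\cong S_n$ for $n>2k$, which is exactly the argument in \cite{godsil:grafy}. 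Your route is more self-contained: it avoids both the unverified counting identity in \eqref{def:gamma} and the external rigidity result for $\gamma$, at the price of importing EKR; your closing steps (injectivity of $x\mapsto\mathscr{S}_x$, and $f(a)=\widetilde{\varphi}(a)$ by comparing two $k$-element sets) are all sound. A genuine bonus of your version is that it makes the hypothesis $s\geq 3$ explicit: for $s=2$ and $k\geq 2$ each point lies on a single block, the structure is a perfect matching on $\binom{2k}{k}$ points, and its automorphism group strictly contains $S_X^k$, so the proposition as stated fails there --- a restriction the paper's proof also presupposes silently, since formula \eqref{def:gamma} quantifies over $a_4,\dots,a_s$ and is meaningless for $s<3$.
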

\begin{proof} 
  The point is to prove $\Aut({\CRSpace(X,k,s)}) \subset S_X^k$.
%
%
Let $f\in\Aut({\CRSpace(X,k,s)})$. 
It suffices (cf. \cite{kulki}, \cite{klin}) to show that
$f$ is an automorphism of the relation $\gamma$ defined below:
\begin{ctext}
  $a \mathrel{\gamma} b \iff a,b \in \sub_k(X), \;|a\cap b|=k-1$.
\end{ctext}
On the other hand, the relation $\gamma$ can be characterized by the following formula
\eqref{def:gamma},
expressible in the language of the incidence structure $\CRSpace(X,k,s)$.
For each set $\{a_1,...,a_j\}\subset\sub_k(X)$, $j\leq s$ we write
\begin{ctext}
  $\varpi(a_1,...,a_j)$ iff there exists $C\in{\mathscr E}_k^s(X)$ with $a_1,...a_j\in C$.
\end{ctext}
Let $a,b \in \sub_k(X)$.
\begin{multline}\label{def:gamma}
  a \mathrel{\gamma} b \iff \neg \varpi(a,b) \Land
  (\exists a_{4},...,a_{s}\in\sub_k(X))
  \Big[ \varpi(a_{4},...,a_{s},a)  \land \varpi(a_{4},...,a_{s},b)
  \\
  \land \big|\big\{ c\in\sub_k(X)\colon 
  \varpi(a_4,...,a_s,c) \land \varpi(a,c) \land \varpi(b,c)  \big\}\big|
  = \binom{2k-1}{k}\Big].
\end{multline}
  Since $f$ preserves $\varpi$ it preserves $\gamma$ as well and thus it is
  determined by a permutation of $X$.
\end{proof}
\begin{rem}
 Proposition \ref{prop:crem:aut} does not remain valid for arbitrary $|X|$.
 In \cite[Remarks 4.6 and 4.7]{kulki} it was proved that
 $S_7 \subsetneq \Aut({\GrrSpace(7,2)})$ and
 $S_8 \subsetneq \Aut({\GrrSpace(8,2)})$.
 Evidently, $\Aut(\varkappa({\GrrSpace(X,l)}))\cong\Aut({\GrrSpace(X,l)})$,
 so $S_7 \subsetneq \Aut(\CRSpace(7,2,3))$ and
 $S_8 \subsetneq \Aut(\CRSpace(8,2,3))$. The problem to determine
 $\Aut(\CRSpace(ks + m,k,s))$ with $m > 0$ needs more specific methods.
\end{rem}

\section{Geometric realizations}

Let us consider a projective space 
${\goth P} = PG(n-2,{\fixfield})$ ($n\geq 5$)
with an arbitrary coordinate field $\fixfield$, 
let $F$ be the corresponding set of scalars,
and let 
$Q = \{ q_0,\ldots,q_{n-1}  \}$ be a projective frame in $\goth P$.
Set $X = \{ 0,\ldots,n-1 \}$. 
With each subset $u\in\sub(X)$ we associate the projective subspace 
$Q_u = \overline{ \{ q_i\colon i \in u \}}$ of $\goth P$; 
clearly,
$\dim(Q_u) = |u|-1$ for a proper subset $u$.
The reader can find immediately connections with base subsets of \cite{bassub} 
(and \cite{pankov}): in case of \cite{bassub} a projective base of ${\goth P}$
is used in place of a projective frame $Q$, with remaining definitions unchanged.

Let $U_1,U_2$ be subspaces of a vector space.
The following formula is well known:
\begin{equation}\label{wz:dim}
  \dim(U_1+U_2) = \dim(U_1) + \dim(U_2) - \dim(U_1\cap U_2).
\end{equation}
From this we easily derive that the same formula holds for subspaces of a projective space.

\def\projsubset{\cal P}
\def\projsub(#1,#2){{\projsubset}_{#1}({#2})}
Consequently, 
\begin{equation}\label{war:def:reprez}
  \text{\em for each nonvoid proper subset }a\ \text{\em of } X\ \text{\em the set }
  Q_a \cap Q_{\varkappa(a)} \ \text{\em is a single point }p_a. 
\end{equation}
In the sequel we shall frequently use the map $p\colon a \longmapsto p_a$
defined in \eqref{war:def:reprez},
  $$p\colon \sub(X)\setminus\{ \emptyset,X \} \longrightarrow {\goth P}.$$ 
Let us write 
  $\projsubset$ for the set $p(\sub(X)\setminus\{ \emptyset,X \})$. 
More precisely, one should use the notation
$\projsub(Q,{\goth P})$ to indicate all the parameters involved. 
Since the space $\goth P$ is homogeneous, 
there is a collineation of $\goth P$ that maps 
$\projsub(Q_1,{\goth P})$ onto $\projsub(Q_2,{\goth P})$ 
for each two frames $Q_1,Q_2$ in $\goth P$ 
and thus the parameter $Q$ is inessential in the definition of $\cal P$. 
The parameter $\goth P$ may be, sometimes, essential.

Note that $p_{\{ i \}} = q_i$ for each $i \in X$.
Directly from the definition we get $p_a = p_{\varkappa(a)}$.
%
\par
The projective spaces considered in this note are at least threedimensional and thus
each one can be represented as the projective space over a vector space.
Let $\field V$ be a vector space with a basis $e_1,\ldots,e_{n-1}$, over a field $\fixfield$.
Set 
$e_0 = \sum_{i=1}^{n-1} e_i$. Then the family 
  \begin{ctext}
    $Q \quad = \quad (q_i = \gen{e_i} = F e_i\colon i=0,...,n-1)$ 
  \end{ctext}
is a frame in the projective space 
  ${\goth P} = {\bf P}({\field V}) = PG(n-2,\fixfield)$ over $\field V$.
\begin{prop}\label{prop:repr:coo}
  Let $a \in\sub(X)$, $a \neq \emptyset, X$, and $0 \notin a$.
  Then 
  \begin{equation}\label{eq:repr:coo}
    p_{a} = p_{\varkappa(a)} = \gen{\sum_{i \in a} e_i}.
  \end{equation}
\end{prop}
\begin{proof}
  Without loss of generality we can assume that $a = \{ 1,...,k \}$.
  If $k=n-1$ then $p_{\varkappa(a)}= p_{\{ 0 \}} = q_{0}$, so \eqref{eq:repr:coo} is valid.
  Assume that $k < n-1$.
  The set of solutions of the equation
  \begin{equation}\label{eq:frame:analytic}
    \alpha_1 e_1 + \cdots + \alpha_k e_k = 
    \beta_0 (\sum_{i=0}^{n-1} e_i) + 
    \beta_{k+1} e_{k+1} + \cdots + \beta_{n-1} e_{n-1}
  \end{equation}
  with the unknown all the $\alpha_i,\beta_j$ is, clearly,
  the projective point $\gen{\sum_{i=1}^{k} e_i}$
  ($\alpha_1 = ... = \alpha_k = \beta_0 = -\beta_{k+1} = ... = -\beta_{n-1}$).
  The left-hand-side of \eqref{eq:frame:analytic} represents an element of 
  $Q_a$ with $a = \{1,...,k\}$, 
  and the right-hand-side of \eqref{eq:frame:analytic} represents an element of 
  $Q_b$ with $b = \{0,k+1,...,n-1\}$, complementary to $a$. Therefore, finally,
  $\gen{\sum_{i=1}^{k} e_i} = p_{a} = p_b$.
\end{proof}

\subsection{Embedding of $\CRSpace(ks,k,s)$}
Note evident observation.
\begin{lem}\label{lem:suma:dwie}
 Let $a',a''\in\sub_k(X)$ be disjoint.
 Then 
   $Q_{\varkappa(a')} \cup Q_{\varkappa(a'')}$ 
 spans the whole space $\goth P$, i.e.
   $\dim(Q_{\varkappa(a')}\cap Q_{\varkappa(a'')}) = |X| - 2k$.
 This dimension is equal to $k (s-2)$
 when $|X| = ks$.
\end{lem}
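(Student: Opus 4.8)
=== BEGIN PLAN ===

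The plan is to compute the dimension of the span $Q_{\varkappa(a')} \cup Q_{\varkappa(a'')}$ directly using the projective version of the dimension formula \eqref{wz:dim}, and then deduce the dimension of the intersection by rearranging that same formula. First I would record the dimensions of the two subspaces: since $a'$ is a $k$-element proper subset of $X$, its complement $\varkappa(a')$ has $|X|-k$ elements, so $\dim(Q_{\varkappa(a')}) = |X|-k$ (using the analytic convention that the dimension equals the number of generating frame points, i.e. geometric dimension plus one); likewise $\dim(Q_{\varkappa(a'')}) = |X|-k$.

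Next I would argue that $Q_{\varkappa(a')}$ and $Q_{\varkappa(a'')}$ jointly span $\goth P$. The key observation is that $\varkappa(a') \cup \varkappa(a'') = \varkappa(a' \cap a'') = \varkappa(\emptyset) = X$, precisely because $a'$ and $a''$ are disjoint $k$-sets, so $a' \cap a'' = \emptyset$. Hence the frame points generating the join $Q_{\varkappa(a')} + Q_{\varkappa(a'')}$ include $q_i$ for every $i \in X$, and since $Q$ is a frame, these span all of $\goth P$; thus $\dim(Q_{\varkappa(a')} + Q_{\varkappa(a'')}) = |X|$ in the analytic convention (equivalently geometric dimension $|X|-1 = n-1$).

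Finally I would substitute into the projective dimension formula. Writing $U_1 = Q_{\varkappa(a')}$ and $U_2 = Q_{\varkappa(a'')}$, the formula $\dim(U_1 \cap U_2) = \dim(U_1) + \dim(U_2) - \dim(U_1 + U_2)$ gives
\[
  \dim(Q_{\varkappa(a')} \cap Q_{\varkappa(a'')}) = (|X|-k) + (|X|-k) - |X| = |X| - 2k,
\]
which is the asserted value. The last sentence is then immediate: if $|X| = ks$, then $|X| - 2k = ks - 2k = k(s-2)$.

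The main obstacle I anticipate is not the computation itself but the bookkeeping around the dimension convention: one must be consistent about whether $\dim$ counts the vector-space dimension (number of frame generators) or the geometric projective dimension, since the formula \eqref{wz:dim} is stated for vector subspaces and only then transported to projective spaces. A secondary point requiring a little care is confirming that the generating frame points of the join really are exactly $\{q_i : i \in \varkappa(a') \cup \varkappa(a'')\}$ and that no cancellation or degeneracy reduces the span — this is exactly where the defining frame property (independence of the $q_i$ in the appropriate sense) and the complement identity $\varkappa(a')\cup\varkappa(a'') = X$ are used together.

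=== END PLAN ===
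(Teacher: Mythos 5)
Your overall route---show that the two subspaces jointly span $\goth P$ because $\varkappa(a')\cup\varkappa(a'')=X$ when $a'\cap a''=\emptyset$, then rearrange the projective version of \eqref{wz:dim}---is exactly the argument the paper has in mind; the paper gives no proof at all, labelling the lemma an ``evident observation''. However, your dimension bookkeeping contains a genuine off-by-one error, at precisely the point you yourself flagged as delicate. The set $Q=\{q_0,\ldots,q_{n-1}\}$ is a projective \emph{frame} in ${\goth P}=PG(n-2,\fixfield)$: these are $n$ points in a space whose underlying vector space $\field V$ has dimension $n-1$ (the paper realizes this with $e_0=\sum_{i=1}^{n-1}e_i$), so the $n$ frame points satisfy one linear relation and are \emph{not} independent. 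Consequently the join $Q_{\varkappa(a')}+Q_{\varkappa(a'')}$, while indeed all of $\goth P$ as you correctly argue, has analytic (vector) dimension $|X|-1$ and geometric dimension $|X|-2$---not analytic dimension $|X|$ and geometric dimension $|X|-1$ as you assert. This is exactly why the paper states $\dim(Q_u)=|u|-1$ only for \emph{proper} subsets $u$ of $X$: the formula fails for $u=X$.

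Your final numeral comes out right only because two slips cancel. With your (correct) analytic dimensions $\dim(Q_{\varkappa(a')})=\dim(Q_{\varkappa(a'')})=|X|-k$ and your (incorrect) join dimension $|X|$, you obtain $|X|-2k$ \emph{in the analytic convention}, and you then compare this numeral directly with the lemma's value $|X|-2k$---but the lemma's $\dim$ is the geometric one (consistently with $\dim(Q_u)=|u|-1$ and with the value $s-2$ for blocks in \ref{prop:crememb:gen}), so your computation as stated actually asserts geometric dimension $|X|-2k-1$. The consistent computations are: analytically, $(|X|-k)+(|X|-k)-(|X|-1)=|X|-2k+1$, which is geometric dimension $|X|-2k$; or geometrically, $(|X|-k-1)+(|X|-k-1)-(|X|-2)=|X|-2k$. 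Either repair yields the lemma, so the defect is local and easily fixed; but as written your key intermediate claim---the dimension of the span---is false in both conventions, because it implicitly treats the frame as a projective basis.
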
 
Consecutively applying \eqref{wz:dim} and \ref{lem:suma:dwie}
we get
\begin{lem}\label{lem:kroj:m}
  Assume $|X| = ks$.
  Let $a_1,...,a_j\in\sub_k(X)$ be pairwise disjoint.
  Then
  \begin{equation}\label{eq:kroj:m}
    \dim(Q_{\varkappa(a_1)}\cap ... \cap Q_{\varkappa(a_j)}) = k(s-j) + j -2.
  \end{equation}
\end{lem}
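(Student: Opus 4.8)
The plan is to induct on $j$, peeling off one factor $Q_{\varkappa(a_j)}$ at a time by means of the dimension formula \eqref{wz:dim}. The base case $j=1$ is immediate: since $\varkappa(a_1)$ is a proper subset of $X$ with $|\varkappa(a_1)| = ks-k$, we get $\dim(Q_{\varkappa(a_1)}) = (ks-k)-1 = k(s-1)-1$, which is the right-hand side of \eqref{eq:kroj:m} for $j=1$. (Equivalently one may start at $j=2$, where the assertion is exactly \ref{lem:suma:dwie}.) For the inductive step write $V := Q_{\varkappa(a_1)}\cap\cdots\cap Q_{\varkappa(a_{j-1})}$ and assume, by the induction hypothesis, that $\dim(V) = k(s-j+1)+(j-1)-2 = k(s-j+1)+j-3$.

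Applying \eqref{wz:dim} to $V$ and $Q_{\varkappa(a_j)}$ yields
\begin{equation*}
  \dim(V\cap Q_{\varkappa(a_j)}) \;=\; \dim(V) + \dim(Q_{\varkappa(a_j)}) - \dim(V + Q_{\varkappa(a_j)}),
\end{equation*}
where $\dim(Q_{\varkappa(a_j)}) = k(s-1)-1$. Substituting the two known dimensions, a one-line rearrangement shows that the target value $k(s-j)+j-2$ is obtained precisely when $\dim(V + Q_{\varkappa(a_j)}) = ks-2 = \dim(\goth P)$, i.e. when $V$ together with $Q_{\varkappa(a_j)}$ spans the whole space $\goth P$. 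Thus the entire content of the lemma is concentrated in this single spanning assertion, which I expect to be the real obstacle: it does \emph{not} follow from \ref{lem:suma:dwie} by itself, since $V$ is an intersection (hence small) rather than one $Q_{\varkappa(\cdot)}$, and the naive inclusion $V + Q_{\varkappa(a_j)} \subseteq Q_{\varkappa(a_1)} + Q_{\varkappa(a_j)} = \goth P$ gives only one of the two inclusions.

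To settle the spanning I would pass to the analytic representation of \ref{prop:repr:coo}, writing a general vector of $\field V$ as $v = \sum_{l=1}^{n-1} x_l e_l$. A direct check shows that $v$ represents a point of $Q_{\varkappa(a_i)}$ exactly when its coordinates satisfy $k-1$ independent linear equations whose support lies in the indices of $a_i$: namely ``$x_l = x_{l'}$ for all $l,l'\in a_i$'' when $0\notin a_i$, and ``$x_l = 0$ for all $l\in a_i\setminus\{0\}$'' when $0\in a_i$ (the distinguished index $0$ enters only through $e_0 = \sum_{l=1}^{n-1} e_l$). Because $a_1,\ldots,a_j$ are pairwise disjoint, these $j$ systems have pairwise disjoint supports, so the forms they use are linearly independent and jointly cut out $\bigcap_{i=1}^{j} Q_{\varkappa(a_i)}$ by $j(k-1)$ independent conditions. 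Dually, the sum of the spaces of linear forms vanishing on the individual $Q_{\varkappa(a_i)}$ is direct, so the forms vanishing on $V$ meet those vanishing on $Q_{\varkappa(a_j)}$ only in $0$; this is exactly the spanning $V + Q_{\varkappa(a_j)} = \goth P$, closing the induction. In fact the same count proves the lemma in one stroke: the intersection has vector dimension $(n-1)-j(k-1) = k(s-j)+j-1$, hence projective dimension $k(s-j)+j-2$. The only bookkeeping subtlety is that when $j=s$ the index $0$ is forced into some $a_i$ (then $X\setminus(a_1\cup\cdots\cup a_j)$ is empty); this is harmless, since the disjoint-support argument is insensitive to which block carries $0$, and for $j<s$ one may relabel so that $0\notin a_1\cup\cdots\cup a_j$.
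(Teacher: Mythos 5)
Your proof is correct, and it is in fact far more detailed than the paper's, which consists of the single sentence that \ref{lem:kroj:m} follows by ``consecutively applying \eqref{wz:dim} and \ref{lem:suma:dwie}'' --- i.e.\ precisely the induction you set up, with the spanning step left entirely tacit. You are right that the crucial assertion $V + Q_{\varkappa(a_j)} = {\goth P}$, where $V = Q_{\varkappa(a_1)}\cap\cdots\cap Q_{\varkappa(a_{j-1})}$, is not literally an instance of \ref{lem:suma:dwie}, and your annihilator argument settles it correctly; moreover, your coordinate count proves the lemma in one stroke, since the $j$ systems of $k-1$ linear forms with pairwise disjoint supports (``$x_l=x_{l'}$ on $a_i$'' or ``$x_l=0$ on $a_i\setminus\{0\}$'') cut out the intersection by $j(k-1)$ independent conditions, giving vector dimension $(ks-1)-j(k-1)$ and hence projective dimension $k(s-j)+j-2$, with the index $0$ lying in at most one $a_i$ and not affecting the count. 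One caveat: the obstacle is milder than you suggest, because the spanning also has a one-line synthetic justification that is presumably what the authors took for granted: each $\varkappa(a_i)$ with $i<j$ contains $X\setminus(a_1\cup\cdots\cup a_{j-1})$, so $Q_{X\setminus(a_1\cup\cdots\cup a_{j-1})}\subseteq V$; and since $a_j$ is disjoint from $a_1,\ldots,a_{j-1}$, we have $\bigl(X\setminus(a_1\cup\cdots\cup a_{j-1})\bigr)\cup\varkappa(a_j)=X$, whence $V+Q_{\varkappa(a_j)}\supseteq Q_X={\goth P}$. So the trade-off is this: the paper's (implicit) route stays coordinate-free, using only \eqref{wz:dim} and the frame's spanning property, while your analytic route leans on the coordinates of \ref{prop:repr:coo} but buys a self-contained, induction-free proof and the sharper structural fact that the annihilators of the $Q_{\varkappa(a_i)}$ sum directly --- which is also the cleanest way to see why pairwise disjointness of the $a_i$ is exactly the hypothesis needed.
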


%
%
As an important consequence of \ref{lem:kroj:m} and \ref{prop:repr:coo} 
we obtain the following
\begin{prop}\label{prop:crememb:gen}
  Assume that $|X| = k s$.
  The map 
  $\sub_k(X) \ni a \longmapsto p_a$ embeds $\CRSpace(X,k,s)$ into 
  ${\goth P} = PG(n-2,{\goth F})$:
  each  block $B$ of $\CRSpace(X,k,s)$ (an $s$-element set of points)
  is mapped on points on a $(s-2)$-subspace of $\goth P$. Precisely: 
  the projective dimension of the subspace $\overline{p(B)}$ is $s-2$.
  Moreover, $p_a \notin \overline{p(B)}$ when $a \notin B$ and therefore
  $p$ sends distinct blocks to distinct $(s-2)$-subspaces.
\end{prop}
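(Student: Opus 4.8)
The plan is to argue entirely through the analytic representation furnished by Proposition \ref{prop:repr:coo}. To each point $a\in\sub_k(X)$ I attach a representing vector $v_a\in\field V$: if $0\notin a$ put $v_a=\sum_{i\in a}e_i$, and if $0\in a$ use the identity $p_a=p_{\varkappa(a)}$ together with $0\notin\varkappa(a)$ to put $v_a=\sum_{i\in\varkappa(a)}e_i$. In either case $p_a=\gen{v_a}$ by \ref{prop:repr:coo}, and $v_a$ is a $0/1$-vector in the basis $e_1,\dots,e_{n-1}$ whose support is $a$ (when $0\notin a$, of size $k$) or $\varkappa(a)$ (when $0\in a$, of size $k(s-1)$). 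Everything then reduces to linear algebra over $\fixfield$.

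First I fix a block $B=\{a_1,\dots,a_s\}$, which (as $|X|=ks$) partitions $X$ into $s$ pairwise disjoint $k$-sets; exactly one of them, say $a_1$, contains $0$. Since $a_i\subseteq\varkappa(a_{i'})$ whenever $i\neq i'$, each point $p_{a_i}=Q_{a_i}\cap Q_{\varkappa(a_i)}$ lies in $Q_{a_i}\subseteq Q_{\varkappa(a_{i'})}$, so $p_{a_i}\in\bigcap_{i'}Q_{\varkappa(a_{i'})}$ for every $i$; hence $\overline{p(B)}\subseteq Q_{\varkappa(a_1)}\cap\cdots\cap Q_{\varkappa(a_s)}$, a subspace of projective dimension $s-2$ by \ref{lem:kroj:m} (taking $j=s$). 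To see that the $s$ points actually fill it I pass to vectors: for $j\geq 2$ one has $v_{a_j}=\sum_{i\in a_j}e_i$, while $\varkappa(a_1)=a_2\cup\cdots\cup a_s$ yields the single relation $v_{a_1}=v_{a_2}+\cdots+v_{a_s}$; as $v_{a_2},\dots,v_{a_s}$ have pairwise disjoint supports inside $\{e_1,\dots,e_{n-1}\}$ they are independent, so $\operatorname{span}_{\fixfield}(v_{a_1},\dots,v_{a_s})$ has vector dimension $s-1$. Therefore $\overline{p(B)}=\bigcap_i Q_{\varkappa(a_i)}$ and its projective dimension is exactly $s-2$.

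For the ``moreover'' clause I would work with $W=\operatorname{span}_{\fixfield}(v_{a_2},\dots,v_{a_s})$, the subspace underlying $\overline{p(B)}$. Writing a vector in coordinates $\sum_{i=1}^{n-1}x_ie_i$, membership in $W$ is equivalent to: $x_i=0$ for all $i\in a_1\setminus\{0\}$, and $x_i$ is constant on each $a_j$ with $j\geq 2$. Now take $a\in\sub_k(X)$ with $a\notin B$ and test $v_a$. If $0\notin a$, the support of $v_a$ is $a$, so $v_a\in W$ would force $a\cap(a_1\setminus\{0\})=\emptyset$ and each $a_j$ $(j\geq 2)$ to be either contained in or disjoint from $a$; as $|a|=k=|a_j|$ this makes $a$ equal to one of $a_2,\dots,a_s$, contradicting $a\notin B$. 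If $0\in a$, the support of $v_a$ is $\varkappa(a)$, and the vanishing condition forces $a_1\setminus\{0\}\subseteq a$, hence $a_1\subseteq a$ and, again by cardinality, $a=a_1\in B$, a contradiction. Thus $p_a=\gen{v_a}\notin\overline{p(B)}$ whenever $a\notin B$.

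The remaining assertions follow formally. If $B\neq B'$ are distinct blocks, pick $a\in B\setminus B'$ (or in $B'\setminus B$); then $p_a\in\overline{p(B)}$ while $p_a\notin\overline{p(B')}$ by the clause just proved, so $\overline{p(B)}\neq\overline{p(B')}$. Injectivity of $a\mapsto p_a$ on points is immediate from the representation when $s\geq 3$: two $0/1$-vectors are proportional only if their supports coincide, and here the supports have sizes in $\{k,k(s-1)\}$, so $\gen{v_a}=\gen{v_{a'}}$ forces $a=a'$. I expect the only delicate point to be the ``moreover'' clause, specifically keeping the bookkeeping straight in the case $0\in a$, where one must reason through $\varkappa(a)$ rather than $a$ itself; the dimension count and the distinctness of blocks are then routine.
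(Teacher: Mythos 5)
Your proof is correct and takes essentially the same route as the paper's: both rest on the analytic representation of \ref{prop:repr:coo} and the dimension count of \ref{lem:kroj:m}, with the ``moreover'' clause settled by the same kind of coordinate computation. Your packaging --- the explicit relation $v_{a_1}=v_{a_2}+\cdots+v_{a_s}$ plus independence of disjointly supported $0/1$-vectors, and the support/constancy description of the underlying space $W$ --- is just a tidier, equivalent form of the paper's coefficient chase (there one derives $\alpha_i=-\alpha_s$ from an index $j_i\in a_i\setminus a$ and concludes $\sum_{j\in a}e_j=\theta$).
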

\begin{proof}
  Let $B = \{ a_1,...,a_s \}$ be a block of $\CRSpace(X,k,s)$. 
  Each point $p_{a_i}$ is contained, by definition \eqref{war:def:reprez}, in
  the subspace $Q_{\varkappa(a_i)}$.
  And $p_{a_i}$ is contained also in $Q_{\varkappa(a_j)}$ for $i\neq j$: 
  indeed, $a_i\subset \varkappa(a_j)$ and thus
  $p_{a_i} \in Q_{a_i}\subset Q_{\varkappa(a_j)}$.
  Consequently,   
    $p_{a_1},...,p_{a_s}\in Q_{\varkappa(a_1)}\cap ... \cap Q_{\varkappa(a_s)}$.
  From \eqref{eq:kroj:m} we get that
    $\dim(Q_{\varkappa(a_1)}\cap ... \cap Q_{\varkappa(a_s)}) = s -2$.
  \par
  Now, suppose that $\sum_{j=1}^t \alpha_j p_{a_{i_j}} = \theta$ is the zero vector of 
  $\field V$  for some scalars $\alpha_j$, $t<k$. 
  Without loss of generality we can assume that
  $0\notin a_{j_1},...,a_{j_t}$ and then from \ref{prop:repr:coo} we get $\alpha_j = 0$
  for each $j$. So, no proper subset of the points in $p(B)$ is projectively
  dependent.  
  \par
  Suppose that $p_a \in \overline{p(B)}$, $B = \{ a_1,\ldots,a_s \}$ is a block of
  $\CRSpace(X,k,s)$, $B\not\ni a$ is a point of $\CRSpace(X,k,s)$.
  Without loss of generality we can assume that $0\in a_s$.
  Assume $0\notin a$. Then \ref{prop:repr:coo} yields
  $\sum_{j\in a} e_j = \sum_{i=1}^{s-1}(\alpha_i \sum_{j\in a_i} e_j) +
  \alpha_s \sum_{j\in a_1 \cup\ldots\cup a_{s_1}} e_j$.
  For each $i=1,\ldots,s-1$ there is $j_i\in a_i\setminus a$, which gives
  $0\cdot e_{j_i} = \alpha_i\cdot e_{j_i} + \alpha_s\cdot e_{j_i}$
  and therefore $\alpha_i = -\alpha_s$.
  Finally, we conclude with $\sum_{j\in a} e_j = \theta$.
  Analogous contradiction arizes when we put $0\in a$.
  This completes the proof of our claim.
\end{proof}
In view of \ref{prop:crememb:gen} with each block $B$ of $\CRSpace(ks,k,s)$
we can uniquely associate a $(s-2)$-dimensional subspace $p_B = \overline{p(B)}$ 
of $\goth P$ such that the projective configuration (embedded into $\goth P$)
\begin{ctext}
  $\struct{ \projsub({},{\goth P}), \{ p_B\colon B\in{\mathscr E}_k^s(sk) \} }$
\end{ctext}
is isomorphic to $\CRSpace(sk,k,s)$.
\par
With computations analogous to those in the proof of \ref{prop:crememb:gen} we can 
formulate a direct characterization of the subspaces $p_B$:
\begin{prop}\label{prop:repr:blocks}
  Let $B = \{ a_1,\ldots,a_s \}$ be a block of $\CRSpace(X,k,s)$, $|X| = ks$;
  let $0\in a_s$. Then
  \begin{equation}\label{eq:repr:blocks}
    p_B = \gen{\sum_{j\in a_1} e_j,\ldots,\sum_{j\in a_{s-1}} e_j}.
  \end{equation}
\end{prop}
Note that, as an evident consequence of \ref{prop:crem:aut}
the embedding defined in \ref{prop:crememb:gen} is {\em free} in the following
sense:
{\em each automorphism of $\CRSpace(ks,k,s)$ extends to a projective collineation of
$PG(ks-2,\fixfield)$}.

\par
One more property of the above embedding may be worth to note.
\begin{prop}\label{prop:minspan}
  Let $1< k$ be an integer. Assume that the characteristic of $\fixfield$ does not divide $k$.
  Let $A\subsetneq X$ and $2k < |A|$. Then the two projective subspaces of $\goth P$:
  the set $Q_A$ spanned by the points $q_x = p_{\{ x \}}$ with $x \in A$,
  and the subspace spanned by the set $\{ p_a\colon a \in \sub_k(A) \}$,
  coincide.
\end{prop}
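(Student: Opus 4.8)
The plan is to pass to vector coordinates and reduce the statement to a clean linear-algebra fact about sums of basis vectors. First I would reduce to the case $0 \notin A$. Since $A \subsetneq X$ there is an index $j \in X \setminus A$; using the frame-homogeneity of $\goth P$ noted above, the transposition $(0\,j)\in S_X$ is realized by a collineation $g$ permuting the frame points, $g(q_i)=q_{(0\,j)(i)}$. Because $\varkappa$ commutes with permutations of $X$, we get $g(p_a)=p_{(0\,j)(a)}$ and $g(Q_A)=Q_{(0\,j)(A)}$, so the asserted equality for $A$ is equivalent to the one for $(0\,j)(A)$, and we may assume $0\notin A$.

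With $0\notin A$, Proposition \ref{prop:repr:coo} gives $p_a=\gen{v_a}$ with $v_a=\sum_{i\in a} e_i$ for every $a\in\sub_k(A)$, and the vector subspace underlying $Q_A$ is $V_A=\operatorname{span}_F\{e_x\colon x\in A\}$. Each $v_a$ is a sum of basis vectors indexed inside $A$, so $\gen{v_a}\subseteq Q_A$; hence the span of $\{p_a\colon a\in\sub_k(A)\}$ is contained in $Q_A$, and only the reverse inclusion needs work. Let $W$ denote the vector subspace spanned by the $v_a$.

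Next I would produce all differences $e_x-e_y$ inside $W$. Given $x,y\in A$, the hypothesis $2k<|A|$ gives $|A\setminus\{x,y\}|\geq k-1$, so I can pick a $(k-1)$-subset $c\subseteq A\setminus\{x,y\}$ and form $a=c\cup\{x\}$, $b=c\cup\{y\}$ in $\sub_k(A)$; then $v_a-v_b=e_x-e_y\in W$. These differences span the hyperplane $H=\{\sum_{x\in A}\lambda_x e_x\colon \sum_{x\in A}\lambda_x=0\}$ of $V_A$, so $H\subseteq W$.

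Finally, the characteristic hypothesis upgrades $H$ to all of $V_A$. Any $v_a=\sum_{i\in a} e_i$ has coefficient-sum equal to $k$, and since the characteristic of $\fixfield$ does not divide $k$ we have $k\neq 0$ in $F$, whence $v_a\notin H$. Therefore $W\supsetneq H$, and as $H$ is a hyperplane of $V_A$ this forces $W=V_A$, i.e. the span of $\{p_a\}$ equals $Q_A$. The only subtle point — and the place where both hypotheses genuinely enter — is this last step: the differences alone yield merely the codimension-one subspace $H$, and it is precisely the non-vanishing of $k$ in $F$ that lets a single generator $v_a$ escape $H$ and recover the full span. (Alternatively one may skip the reduction and treat $0\in A$ directly, using $p_a=\gen{e_0-\sum_{i\in a\setminus\{0\}} e_i}$ obtained from $p_a=p_{\varkappa(a)}$ and Proposition \ref{prop:repr:coo}; the $v_a$ with $0\notin a$ already span $V_{A\setminus\{0\}}$, and a single such $p_a$ then recovers $e_0$.)
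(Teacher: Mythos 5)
Your proof is correct and follows essentially the same route as the paper's: after the same reduction to $0\notin A$ (the paper simply relabels, you justify it via frame homogeneity) and the same appeal to Proposition \ref{prop:repr:coo}, you produce the differences $e_x-e_y$ from two $k$-sums sharing $k-1$ indices, exactly as in the paper's internal lemma. The only difference is cosmetic and sits in the last step: where the paper writes the explicit identity $e_i=\frac{1}{k}\big((e_i+e_{i_2}+\cdots+e_{i_k})+(e_i-e_{i_2})+\cdots+(e_i-e_{i_k})\big)$, you note that the differences span the coefficient-sum-zero hyperplane of $V_A$ and that any $v_a$ escapes it because $k\neq 0$ in $\fixfield$ --- the identical use of the characteristic hypothesis, repackaged as a codimension-one argument.
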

\begin{proof}
  The claim is a tautology for $k=1$, so let $k \geq 2$. 
  Set $v = |A|$. Since $v < |X|$, we can assume that $0\notin A$ 
  and make use of \ref{prop:repr:coo},
  and then it is clear that $\{ p_a\colon a \in \sub_k(A) \} \subset Q_A$.
  This, what remains to prove consists in the following 
  \par\noindent{\bf Lemma}. 
  {\em
  Let $B = \{e_1,...,e_v\}$ be a basis of a vector space $U$. Then the family of all the sums
  $e_{i_1}+...+e_{i_k}$ with $1\leq i_1<..i_k\leq v$ spans the space $U$.
  }
  \par\noindent
  {Proof of the lemma}:
  Let $U_0$ be the subspace spanned by the sums as above.
  First, observe that 
  $(e_{i_1}+...+{e_{i_{k-1}}}+ e_{i'}) - (e_{i_1}+...+{e_{i_{k-1}}}+ e_{i''})\in U_0$ for 
  each pairwise distinct basis vectors $e_{i_1},...,{e_{i_{k-1}}},e_{i'},e_{i''}$. 
  This proves that $e' - e'' \in U_0$
  for arbitrary $e',e''\in B$.
  To close the proof note that
  $e_i = \frac{1}{k}\big( (e_i + e_{i_2} + ... + e_{i_k}) + 
  ((e_i - e_{i_2}) + ... + (e_i - e_{i_k})\big) \in U_0$.
\end{proof}

\begin{note}\normalfont
The above embedding $p$ in the case of $\CRSpace(6,2,3)$ is the well known Cremona-Richmond 
Configuration of points and lines of a $(2\cdot 3 -2) = 4$ dimensional projective space 
(cf. \cite{coxet}); in the particular case 
${\fixfield} = Z_2$, it is precisely the embedding of $\GrrSpace(6,2)$ into $PG(4,2)$ 
considered in \cite{kulki}. 
On the other hand note (comp. \ref{prop:minspan}) that $\CRSpace(6,2,3)$ can be also
embedded into a less-dimensional space $PG(3,2)$. 
In the case when we embed $\CRSpace(6,2,3)$ into a projective space over $Z_2$ we 
have the following irregularity: Let $A = \{ 1,2,3 \}$. Then $\dim(Q_A) = 3$
and 
$\dim(\overline{ \{ p_a\colon a \in \sub_2(A) \}}) = 2$.
Firstly, this yields that after (C-R-) embedding, some more collinearities may appear
than those inherited from $\CRSpace(6,2,3)$ (see \ref{exm:fano:3}).
\end{note}

\begin{exm}
  Note, moreover, that the embedding defined in \ref{prop:crememb:gen} 
  gives us a projective point-line configuration 
  isomorphic to $\GrrSpace(3k,k)$ for arbitrary $k \geq 3$, embedded into an arbitrary 
  $(3k-2)$-dimensional projective space.
\end{exm}

\begin{exm}\label{exm:CR-WCS}
  Let us pay attention to a weak chain structure 
  ${\goth M} := \CRSpace(X,k,4)$ with $|X| = 4k$.
  As already noted, the neighborhood of a point of $\goth M$ is the configuration 
  $\GrrSpacex(3k,k)$: the ``classical'' Cremona-Richmond configuration.
  Let us quote some evident `synthetic' properties of $\goth M$. 
  Their proofs consist in straightforward computations.
  \begin{itemize}\def\labelitemi{--}\em
  \item
    Let $A_1,A_2,A_3$ be blocks of $\goth M$, which pairwise intersect in points: 
    $a_i \in A_{i},A_{i+1 \mod 3}$.
    Then $a_1,a_2,a_3$ are on a block 
    {\em(cf. the axiom 
    `{\em $a_i \collin a_j $ for $i,j\in\{ 1,2,3 \}$ implies $a_1,a_2,a_3$ on a block}'
    of the M{\"o}bius geometry)}.
  \item
    Let $B_0,B_1,B_2$ be blocks of $\goth M$. 
    If $|B_0 \cap B_i| = 2$ for $i=1,2$ then 
    $B_1 = B_2$  or $B_1 \cap B_2 \subset B_0$.
    {\em This yields that no ``chain-plane'' can be spanned in $\goth M$ 
    by a pair of blocks that have two common points. Moreover
    {\em the Miquel Axiom and the Bundle Axiom are valid in $\goth M$}, 
    but they are {\em trivially valid}:
    no system of points and blocks of $\goth M$ satisfies the assumptions 
    of the axioms in question.}
  \item
    Let $B_0 \cap B_1 = \{ a \}$ and $a \neq b \in B_1$ for points $a,b$ and blocks $B_0,B_1$ 
    of $\goth M$.
    Then there is  a block $B_2$ such that $b \in B_2 \neq B_1$, $B_0 \cap B_2 = \{ a \}$.
    {\em Therefore it is hard to imagine how to define the ``tangency'' 
    relation in $\goth M$.}
  \end{itemize}
\end{exm}

\begin{prob}
  Is it possible to represent a weak chain structure $\CRSpace(X,k,4)$ on a sphere for 
  some $X$?: represent with blocks interpreted as circles!
  In view of \ref{exm:CR-WCS} it may be a hard task.
\end{prob}

\subsection{Embedding of $\CRSpace(ks+m,k,s)$, $m>0$}
Of course, the map/representation $p$ defined on subsets of a set $X$
can be considered in the case $|X|-ks > 0$  
as well.

\begin{prop}\label{prop:brakemb}
  Let $|X| = n = k\cdot s + m$, $m >0$.
  If the characteristic $\mathrm{char}(\fixfield)$ of $\fixfield$ does not divide $s-1$ 
  then $p$ does not yield any embedding of 
  $\CRSpace(n,k,s)$ into $PG(n-2,\fixfield)$
  with the blocks interpreted as $(s-2)-subspaces$ (comp. \ref{prop:crememb:gen}).
  If $\mathrm{char}(\fixfield) \mathrel{|} (s-1)$ then $p$ defined by \eqref{war:def:reprez}
  is a representation of $\CRSpace(n,k,s)$ in $PG(n-2,\fixfield)$.
\end{prop}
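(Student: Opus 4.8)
The plan is to fix an arbitrary block $B$ of $\CRSpace(n,k,s)$ and compute, in the coordinates fixed before \ref{prop:repr:coo}, the $s$ vectors representing the points of $B$, reducing the whole proposition to a single rank computation in which the scalar $s-1$ appears. Recall that, since $|X|=ks+m$, a block has the form $B=\{a_1\cup p,\dots,a_s\cup p\}$ with $a_1,\dots,a_s\in\sub_k(X)$ and $p\in\sub_m(X)$ pairwise disjoint; as $ks+m=n=|X|$, the sets $a_1,\dots,a_s,p$ in fact partition $X$. I would abbreviate $w_i=\sum_{t\in a_i}e_t$ and $w_p=\sum_{t\in p}e_t$, and use throughout that $p_a=p_{\varkappa(a)}$ together with $\varkappa(a_i\cup p)=\bigcup_{j\ne i}a_j$.

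The first step is to split according to which part of the partition contains the coordinate-distinguished index $0$, so as to be able to invoke \ref{prop:repr:coo} (which applies only to subsets avoiding $0$). If $0\in p$, then $0\notin\bigcup_{j\ne i}a_j$ for every $i$, so \ref{prop:repr:coo} gives $p_{a_i\cup p}=\gen{\sum_{j\ne i}w_j}$; the representing vectors are $v_i=W-w_i$ with $W=w_1+\dots+w_s$. If instead $0\in a_s$ (after relabelling), then for $i<s$ the set $a_i\cup p$ avoids $0$ and $p_{a_i\cup p}=\gen{w_i+w_p}$, while $\varkappa(a_s\cup p)$ avoids $0$ and $p_{a_s\cup p}=\gen{w_1+\dots+w_{s-1}}$; here $v_i=w_i+w_p$ for $i<s$ and $v_s=w_1+\dots+w_{s-1}$. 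In both cases the $w_j$ (and $w_p$, when present) have pairwise disjoint supports among $e_1,\dots,e_{n-1}$ and are therefore linearly independent, and $w_p\ne\theta$ precisely because $m>0$ forces $p\ne\emptyset$ while $0\notin p$ in the second case.

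The key step is then to isolate the only possible dependence among $v_1,\dots,v_s$. A direct summation yields $\sum_{i=1}^{s}v_i=(s-1)W$ in the first case and $\sum_{i=1}^{s-1}v_i-v_s=(s-1)w_p$ in the second; in each case the scalar $s-1$ multiplies a nonzero vector ($W$ or $w_p$, respectively). If $\mathrm{char}(\fixfield)\nmid(s-1)$, this exhibits $W$ (resp.\ $w_p$) as a nonzero combination of the $v_i$, from which one recovers every $w_j$, so that $\mathrm{span}(v_1,\dots,v_s)=\mathrm{span}(w_1,\dots,w_{s})$ (resp.\ $\mathrm{span}(w_1,\dots,w_{s-1},w_p)$) has vector dimension $s$; the points of $B$ then span a projective $(s-1)$-subspace, not an $(s-2)$-subspace, so no interpretation of the blocks as $(s-2)$-subspaces can yield an embedding. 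If $\mathrm{char}(\fixfield)\mid(s-1)$, the same identity degenerates to a genuine relation ($\sum_i v_i=\theta$, resp.\ $v_s=\sum_{i<s}v_i$); a short check that $v_1,\dots,v_{s-1}$ remain independent then shows the span has vector dimension exactly $s-1$, i.e.\ every block is carried onto an $(s-2)$-subspace, so $p$ is a representation in the claimed sense.

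The main obstacle I anticipate is organizational rather than computational: one must treat the location of $0$ carefully (since \ref{prop:repr:coo} only applies to subsets avoiding $0$), and both cases must be run through to obtain the representation statement, which requires \emph{every} block, not merely one, to land on an $(s-2)$-subspace. Compared with \ref{prop:crememb:gen}, the qualitative novelty is exactly the nonzero vector $w_p$ (forced by $m>0$), which promotes the automatic relation of the $m=0$ case to one weighted by $s-1$; confirming $w_p\ne\theta$ and that no further collapse of the rank occurs in the divisibility case are the only points needing genuine care.
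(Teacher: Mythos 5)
Your proof is correct and takes essentially the same route as the paper: both coordinatize the points of a block via \ref{prop:repr:coo} and reduce the whole dichotomy to whether $s-1$ vanishes in $\fixfield$, concluding that the $s$ image points are independent (projective dimension $s-1$) when $\mathrm{char}(\fixfield)\nmid(s-1)$ and span exactly an $(s-2)$-subspace, with proper subsets independent, when $\mathrm{char}(\fixfield)\mid(s-1)$. The only differences are organizational: you treat the two possible locations of the index $0$ (in the $m$-set $p$ or in some $a_i$) explicitly and exhibit the single governing relation $\sum_i v_i=(s-1)W$, resp.\ $\sum_{i<s}v_i-v_s=(s-1)w_p$, directly, whereas the paper normalizes $0\in d$ without loss of generality (legitimate by the symmetry of the frame) and instead assumes an arbitrary dependence, deduces $\beta_i=0$, and produces a nonzero solution of the system \eqref{systm:rownania} via the determinant $(-1)^{s+1}(s-1)$.
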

\begin{proof}
  Let 
    $B = \{ a_1\cup d,\ldots,a_s\cup d \} \in   {\mathscr E}_k^s(X)$.
  So, 
    $a_1,...a_s\in\sub_k(X)$,  $d \in \sub_m(X)$, 
  and
    $a_i\cap a_j = \emptyset = d \cap a_i,a_j$ for all  $1\leq i<j\leq s$.
  Write $c_i = a_i \cup d$.
  Adopt the notation of \ref{prop:repr:coo}; without loss of generality we can 
  assume that $ 0 \in d$ 
  and then 
    $p_{c_i} = p_{\varkappa(c_i)} = \sum\{ e_j\colon j=1,...,n,\,j\notin c_i \}$.
  \par
  {\em Suppose that $p(B)$ is a dependent set of points of $PG(n-2,\fixfield)$}.
  Then there are scalars $\alpha_i\in F$ not all zero such that 
    $\sum_{i=1}^{s} \alpha_i p_{c_i} = \theta =$ 
  the zero vector.
  Let us compute:
  %
  \begin{eqnarray*}
    \sum_{i=1}^s \alpha_i p_{c_i} = & \strut & 
         \sum_{i=1}^s (\alpha_i\sum_{j\notin c_i} e_j)
  \\
   \strut= &  & \alpha_1 \sum e_j \colon j \in \not{a_1} \cup a_2 \cup \ldots \cup a_s
  \\
   \strut & + & \alpha_2 \sum e_j \colon j \in a_1 \cup \not{a_2} \cup \ldots \cup a_s
  \\
   \strut & + & \vdots
  \\
   \strut & + & \alpha_s \sum e_j \colon j \in a_1 \cup a_2 \cup \ldots \cup \not{a_s}
  \\
   \strut = & & \sum_{i=1}^s \beta_i(\sum_{j\in a_i} e_j).
  \end{eqnarray*}
  where
  $\beta_i = \beta_0 - \alpha_i$, $\beta_0 = \sum_{i=1}^s \alpha_i$.
  So, $\beta_i = 0$ for $i =1,...,s$. This yields 
  \begin{ctext}
    $\sum_{i=1}^s\beta_i = (s-1)\beta_0 = 0$.
  \end{ctext}
  Two cases are possible.
  Either $s-1 = 0$ holds in $\fixfield$ (i.e. the characteristic of $\fixfield$ divides $s-1$)
  and in this case we search for a non zero solution of the system 
  \begin{equation}\label{systm:rownania}
    \left\{ \sum_{j=1,j\neq i}^{s} \alpha_j = 0 \colon i = 1,\ldots,s \right\}
  \end{equation}
  of linear equations. The matrix of this system is the characteristic matrix of 
  the form $M = [1]_s - 1 \cdot \Delta_s$, where 
  $[1]_s$ is the $s\times s$ matrix with all the entries $1$, and 
  $\Delta_s$ is the diagonal $s\times s$ matrix. It is a folklore that
  $\det(M) = (-1)^{s+1}(s-1)$. In our case $\det(M) = 0$ in $\fixfield$ and thus the system
  \eqref{systm:rownania} has indeed a non zero solution.
  \par\noindent
  It remains to show that if $B = \{ a_1,\ldots,a_s\}$ is a block of 
  $\CRSpace(X,k,s)$ then no system 
  $p_{a_{i_1}},\ldots,p_{a_{i_t}}$ with $1\leq i_1<\ldots<i_t\leq s$, $t < s$ is 
  projectively dependent.
  To this aim one can assume, without loss of generality, that 
  $0 \notin a_{i_1},...,a_{i_t}$ and apply the reasoning analogous to 
  that in \ref{prop:crememb:gen}.
  \par
  Or $s-1\neq 0$ and thus $\beta_0 = 0$. This, in turn leads to $\alpha_i = 0$
  for $i=1,...,s$, which means that the set $p(B)$ is independent, so 
  $\dim(\overline{p(B)}) = s-1$ and $p$ is not a required embedding.
\end{proof}

Let us quote a few examples:
\begin{exm}\strut
\begin{sentences}
\item
  The map $p$ yields a projective embedding of the structure
  $\CRSpace(n,k,3)$ into $PG(n-2,2)$ for each $n\geq 3k$. Compare the embedding 
  $\mu''$ of $\GrrSpace(n,k)$ into $PG(n-2,2)$ defined in \cite{kulki}.
\item
  The map $p$ yields a projective embedding of the weak chain structure
  $\CRSpace(n,k,4)$ onto a point-plane subconfiguration of $PG(n-2,3)$
  for each $n \geq 4k$.\myend
\end{sentences}
\end{exm}
\begin{exm}
  The classical Cremona-Richmond configuration should be, perhaps, 
  considered as a configuration embedded into a projective space {\em over the reals}.
  In this case, clearly, \ref{prop:brakemb} and \ref{prop:crememb:gen}, \ref{prop:minspan}
  can be applied and we get that $p$ is a realization of the generalized 
  Cremona-Richmond configuration $\CRSpace(ks+m, k,s)$ in the real projective
  $(ks+m-2)$-space iff $m=0$.
  This does not mean, automatically, that there is no free projective realization of
  a Cremona-Richmond configuration $\CRSpace(ks+m,k,s)$ with any $m > 0$
  in a real projective space, but we
  {\em claim} that such a realization does not exist.
\myend
\end{exm}

%

\subsection{Final remarks and examples}

One can, finally, pay attention to the map $p$ defined over the whole set
$\sub(X)\setminus\{ X,\emptyset \}$. Here we give some remarks on the subject.
\begin{prob}
  Determine the projective lines (more generally: `dependencies', i.e. families
  which span in $\goth P$ a subspace with the dimension smaller than the cardinality
  of the family minus 1) 
  which join the points 
  in 
    $\{ p_a\colon a\in\sub_k(X) \} =: \projsub(k,{\goth P}) \subset \projsub({},{\goth P})$ 
  ($k$ is fixed)
  in the projective space $\goth P$ introduced at the beginning of this Section.
\end{prob}

\begin{exm}\label{exm:5-2gen}
  Let us apply the representation 
  $p \colon \sub_k(X)\longrightarrow \projsub(k,{\goth P})$
  defined in \eqref{war:def:reprez},
  where $|X| = 5$. 
  Then the constructed points are points
  of the space ${\goth P} = PG(3,{\fixfield})$ for a field $\fixfield$. 
  Generally, the only at least 3-element sections of lines of $\goth P$ with the 
  set $\projsub({},\goth P)$
  have form
\begin{eqnarray*}
  \left\{ p_a,p_b,p_c \right\} & \text{ where } &
  X = a \cup b \cup c, \; |a| = 2 = |b|,\; |c| = 1,
  \text{ and }
  \\
  \left\{ p_a,p_b,p_c \right\} & \text{ where } & c = a \cup b,\; |a| = 1 = |b|.
\end{eqnarray*}
  The combinatorial schema consisting of such blocks may seem interesting
  (cf. Figure~\ref{fig:5-frame}): 
  in particular, it contains many Desargues subconfigurations.
  Indeed, consider any of following schemas:
\begin{ctext}
  $ p_{\{i,j\}} \left[ \begin{array}{cc|c}
    p_{\{k\}} & p_{\{l,m\}} & p_{\{j,l\}}
    \\
    p_{\{i\}} & p_{\{j\}} & p_{\{m\}}
    \\
    p_{\{k,m\}} & p_{\{l\}} & p_{\{i,k\}}
  \end{array}\right]$,
  where $\{ i,j,k,l,m \} = \{ 1,2,3,4,5 \}$.
  \par
  The two first columns of the matrix consist of two triangles, perspective
  rays consist of two points in each of the rows and the point ``ahead'' the matrix.
\end{ctext}
  However, this structure is not homogeneous: the point 
  $p_a$ has rank $\left\{\begin{array}{ll}7 & \text{when }|a| = 1\\
  4 & \text{when }|a| = 2\end{array}\right.$.
  This yields that the automorphism group of it is trivially determined by the symmetric 
  group $S_X$.
\myend
\end{exm}

\begin{figure}
\begin{center}
 \includegraphics{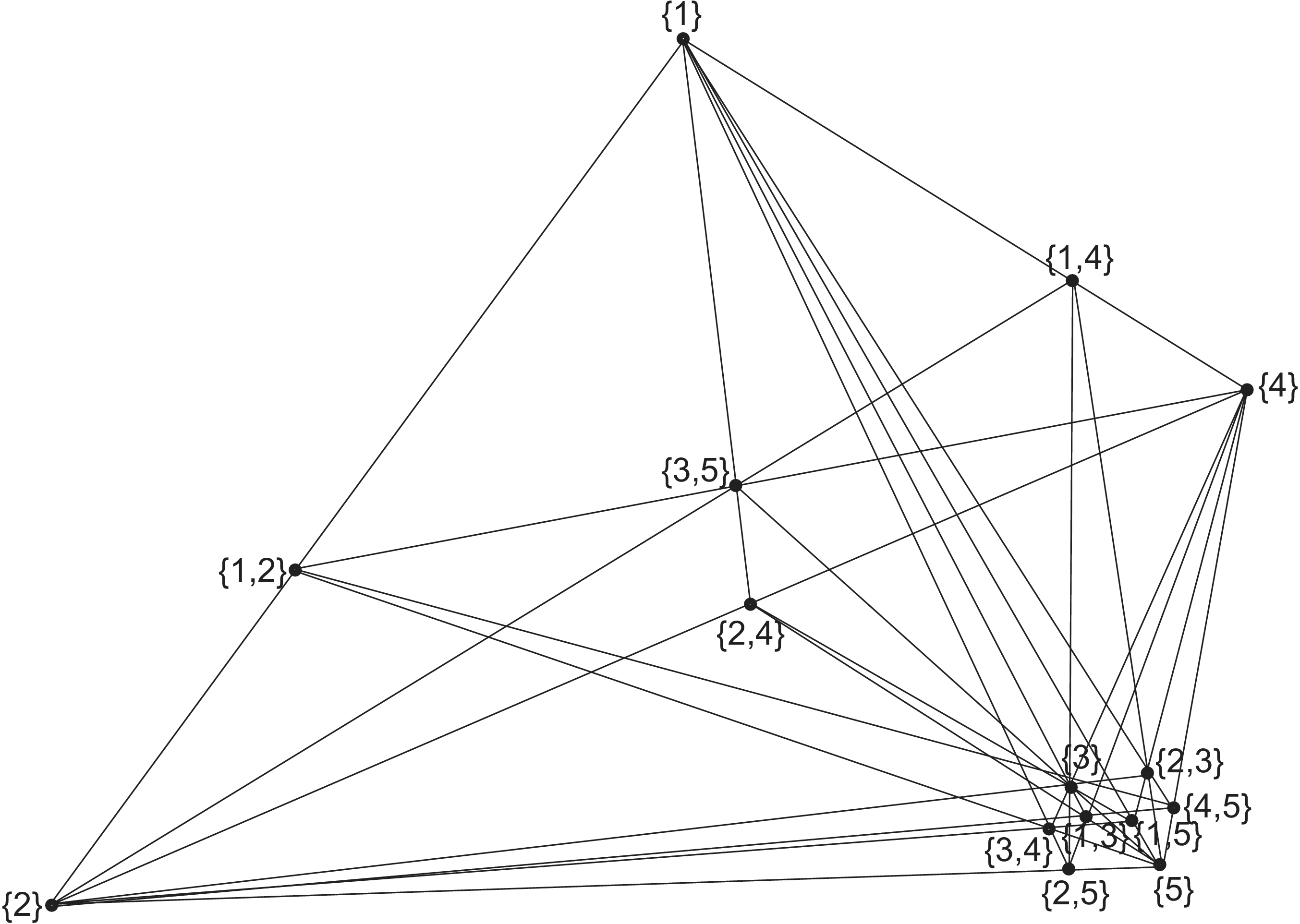}
\end{center}
\caption{Illustration to \ref{exm:5-2gen}}
\label{fig:5-frame}
\end{figure}

Let us point out also that the considered representation $\projsub({},{\goth P})$
{\em may depend} on
the coordinate field of the space $\goth P$.
\begin{exm}\label{exm:fano:3}
  Let ${\fixfield} = Z_2$. Recall that the points of $\goth P$ are the nonzero vectors
  of a vector space over $Z_2$. 
  A straightforward computation gives that
  the third point on the line through distinct points $a,b$ is the (vector-sum)
  $a+b$. So, a point $p_{\{ i,j \}}$ coincides with $q_{i}+q_{j}$ for distinct $i,j$.
  In particular, for each 3-set $Z = \{ i,j,l \}$ of indices, the projective points 
  $p_{\{ i,j \}}$,
  $p_{\{ j,l \}}$, and
  $p_{\{ l,i \}}$
  are on a line of $\goth P$. 
  Let the set $X$ of indices contain exactly 5 elements, so, consider 
  $\projsub(2,{PG(3,2)})$. Then the map $p$ embeds the Desargues configuration 
  $\GrasSpace(5,2)$ into $PG(3,2)$. The obtained Desargues subconfiguration
  completes the configuration on Figure \ref{fig:5-frame} to the projective
  space $PG(3,2)$.
  In particular, there are more lines in $PG(3,2)$ which join the points in
  $\projsub({},{PG(3,2)})$ than those listed in \ref{exm:5-2gen}.
\end{exm}
\begin{exm}\label{exm:repr:coo}
  The reasoning of \ref{exm:fano:3} can be presented in a slightly more general way.
  The analytical representation defined in \ref{prop:repr:coo} 
  allows us to show, in particular, the following
  \begin{sentences}\itemsep-2pt
  \item
    For any three digits $i,j,l$ we have 
    $p_{\{ i,j \}}$, $p_{\{ j,l \}}$, $p_{\{ i,l \}}$, and $p_{\{ i,j,l \}}$
    coplanar.
  \item
    Let ${\fixfield} = Z_3$. 
    For any four  element set $\{ i,j,l,m \}$ the points
    $p_{\{ i,j,l \}}$, $p_{\{ i,j,m \}}$, $p_{\{ i,l,m \}}$, $p_{\{ j,l,m \}}$
    in the space $PG(n,3)$, $n\geq 4$
    are coplanar.
  \myend
  \end{sentences}
\end{exm}


\bigskip

\par\noindent\small
Authors' address:\\
Ma{\l}gorzata Pra{\.z}mowska, Krzysztof Pra{\.z}mowski\\
Institute of Mathematics, University of Bia{\l}ystok\\
ul. Akademicka 2\\
15-246 Bia{\l}ystok, Poland\\
e-mail: 
{\ttfamily malgpraz@math.uwb.edu.pl},
{\ttfamily krzypraz@math.uwb.edu.pl}

\end{document}